\documentclass[11pt]{article}
\date{}
\usepackage{amssymb}
\usepackage{amsmath}
\usepackage{amsthm}
\usepackage{graphicx}
\usepackage{indentfirst}
\allowdisplaybreaks[4]
\newtheorem{theorem}{Theorem}

\newtheorem{definition}[theorem]{Definition}

\newtheorem{lemma}[theorem]{Lemma}

\numberwithin{equation}{section}
\numberwithin{theorem}{section}
\newcommand{\keywords}[1]{\par\noindent\textbf{Keywords:} #1}
\newcommand{\subjclass}[2]{
  \par\noindent\textbf{Mathematics Subject Classification} #2}

\begin{document}

\title{Liouville theorem for the inequality $\Delta_m u+f(u)\leq 0$ on Riemannian manifolds }

\author{ Biqiang Zhao\footnote{
		E-mail addresses: 2306394354@pku.edu.cn}}
            
\maketitle
\setlength{\parindent}{2em}

\begin{abstract}
In this paper, we study the quasilinear inequality $ \Delta_m u+f(u)\leq 0$ on a complete Riemannian manifold, where
\begin{align*}
    m>1,\alpha>m-1 \quad and \quad f(t)> 0,\alpha f(t)-tf^{'}(t)\geq 0, \forall t>0.
\end{align*}
If for some point $x_0$ and large enough $r$,
 \begin{align*}
     vol B_r(x_0)\leq C r^p ln^q r,
 \end{align*}
where $p=\frac{m\alpha}{\alpha-(m-1)},q=\frac{m-1}{\alpha-(m-1)}$ and $B_r(x_0) $ is a geodesic ball of radius $r$ centered at $x_0$, then the inequality possesses no positive weak solution. This generalizes the result in \cite{AS,Sun}.

\end{abstract}

\keywords{ Liouville theorem,  quasilinear inequality}
\subjclass [{ 35J61 · 58J05}


\section{Introduction}
\label{1}
    In this paper, we are concerned with positive solutions of the quasilinear inequality
    \begin{align}
       \Delta_m u+f(u)\leq 0
    \end{align}
     on a geodesically complete connected Riemannian manifold. The operator $ \Delta_m u=div(|\nabla u|^{m-2}\nabla u) $ is called $m$-Laplacian. 
     \par
      In the seminal paper \cite{GS}, Gidas and Spruck proved that if $ n>2$ and $ 1<\alpha<\frac{n+2}{n-2}$, the Lane-Emden equation
     \begin{align*}
         \Delta u+u^\alpha=0\quad in\ \mathbb{R}^n
     \end{align*}
     has no positive solutions. Consider the inequality 
     \begin{align*}
         \Delta u+u^\alpha\leq 0\quad in\ \mathbb{R}^n.
     \end{align*}
     It is known that there is no positive solution if $1\leq \alpha \leq \frac{n}{n-2}$ (see \cite{CM,NS1,NS2,GS}). Later, Serrin and Zou \cite{SZ} generalized the Gidas and Spruck's result to quasilinear elliptic equation. Since then many improvements or generalizations have been obtained for elliptic equations and inequalities (see \cite{CDM,CMP,EP1,EP2}).
     \par
      Next, let's return to results on Riemannian manifolds. Cheng and Yau \cite{CY} proved that if $vol B_r(x_0)\leq Cr^2$ for all large enough $r$, then any positive solution of $\Delta u\leq 0$ is constant. This idea was developed by Grigor'yan and Sun in \cite{AS}. They proved the inequality 
      \begin{align*}
          \Delta u+u^\alpha\leq 0\quad in\ M
      \end{align*}
      has no positive weak solution under the volume growth condition:
      \begin{align*}
          vol B_r(x_0)\leq Cr^{\frac{2\alpha}{\alpha-1}}ln^{\frac{1}{\alpha-1}} r
      \end{align*}
      for $r$ large enough. The analogous results have also been generalized for the $m-$Laplacian operator. In \cite{SXX}, Sun, Xiao and Xu derived Liouville theorems for inequality $\Delta_m u+u^p |\nabla u|^q \leq 0 $ for $(p,q)\in (-\infty,\infty)\times (-\infty,\infty)$. In \cite{PDF}, Mastrolia, Monticelli and Punzo investigated the nonexistence of solutions to parabolic differential inequalities with a potential. Further results for the $m-$Laplacian operator are also achieved when $M$ is a Riemannian manifold (see \cite{GSXX,GSV,HWW,DZ,PDF2}). 
      \par 
       In the present paper, our purpose is to investigate the nonexistence of positive solutions of inequality (1.1). 
        \par
        A function $f\in C^0([0,\infty))\cap C^1((0,\infty))$ is called subcritical with exponent $\alpha$ if 
        \begin{align*}
            f(t)> 0,\quad \alpha f(t)-tf^{'}(t)\geq 0,\  \forall t>0.
        \end{align*}
        Clearly, the following functions are subcritical with exponent $\alpha$
        \begin{align*}
               \quad f(t)=t^\alpha, \quad  f(t)=t^\alpha+t^\beta, \quad  f(t)=\frac{t^\alpha}{1+t^\beta},
        \end{align*}
        where $\alpha>\beta>0 $.
        
        \par 
        Our main result is the following theorem.
        \begin{theorem}
            Let $M$ be a connected geodesically complete Riemannian manifold. Assume $f$ is subcritical with exponent $\alpha>m-1$ and set
        \begin{align*}
            p=\frac{m\alpha}{\alpha-(m-1)},\quad q=\frac{m-1}{\alpha-(m-1)}.
        \end{align*}
        If for some $x_0\in M$ and $C>0$,
            \begin{align*}
                vol B_r(x_0)\leq Cr^p ln^q r
            \end{align*}
            holds for large enough $r$, then (1.1) has no positive weak solution.
        \end{theorem}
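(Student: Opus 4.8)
The plan is to run a nonlinear capacity / test-function argument in the spirit of Grigor'yan--Sun, adapted to the $m$-Laplacian and to a general subcritical $f$, and to push it to the borderline volume rate by means of a logarithmic cutoff. Here a positive weak solution means $u>0$ with
\begin{align*}
\int_M |\nabla u|^{m-2}\nabla u\cdot\nabla\varphi\,d\mu \ge \int_M f(u)\varphi\,d\mu
\end{align*}
for every nonnegative Lipschitz $\varphi$ of compact support; by local boundedness and a Harnack inequality for the $m$-Laplacian, $u$ is locally bounded and locally bounded away from $0$, so all integrals below over a fixed ball $B_{r_0}$ are finite and strictly positive. First I would fix a Lipschitz cutoff $\psi$ and a decreasing $g\in C^1((0,\infty))$ and test against $\varphi=\psi^\kappa g(u)$. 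Writing $\nabla\varphi=\kappa\psi^{\kappa-1}g(u)\nabla\psi+\psi^\kappa g'(u)\nabla u$ and using $g'\le 0$ to keep the good sink term on the left, I obtain
\begin{align*}
\int_M \psi^\kappa g(u)f(u)\,d\mu + \int_M \psi^\kappa(-g'(u))|\nabla u|^m\,d\mu \le \kappa\int_M \psi^{\kappa-1}g(u)|\nabla u|^{m-1}|\nabla\psi|\,d\mu .
\end{align*}
A Young inequality with exponents $\tfrac{m}{m-1}$ and $m$ splits the right-hand side, the $|\nabla u|^m$ part being absorbed by the sink term, which yields the basic estimate
\begin{align*}
\int_M \psi^\kappa g(u)f(u)\,d\mu \le C\int_M \frac{g(u)^m}{(-g'(u))^{m-1}}\,\psi^{\kappa-m}|\nabla\psi|^m\,d\mu .
\end{align*}

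The heart of the matter is to choose $g$ so that the weight $g^m/(-g')^{m-1}$ reabsorbs against $g(u)f(u)$ under H\"older, with the resulting exponent of $|\nabla\psi|$ equal to the critical value $p$. This is precisely where the subcritical hypothesis $\alpha f(t)-tf'(t)\ge 0$, equivalently the monotonicity of $t\mapsto f(t)/t^\alpha$, is indispensable: it is the structural input that lets a single power-type (or $f$-adapted) $g$ control the nonpower nonlinearity and forces the H\"older exponents to balance exactly at $p=\tfrac{m\alpha}{\alpha-(m-1)}$ and $q=\tfrac{m-1}{\alpha-(m-1)}$. Carrying out H\"older and absorbing the factor $\bigl(\int_M\psi^\kappa g(u)f(u)\,d\mu\bigr)^\theta$ back into the left-hand side, and choosing $\kappa=\tfrac{m}{1-\theta}$ so that the residual power of $\psi$ vanishes, gives, with $A:=\int_M\psi^\kappa g(u)f(u)\,d\mu$,
\begin{align*}
A \le C\int_M |\nabla\psi|^{\frac{m}{1-\theta}}\,d\mu ,
\end{align*}
where $\tfrac{m}{1-\theta}\to p$ in the relevant limit.

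Finally I would specialize $\psi=\eta(\rho(\cdot))$ with $\rho=d(\cdot,x_0)$, so that $|\nabla\psi|=|\eta'(\rho)|$ and, by the coarea formula together with $\operatorname{vol}B_r(x_0)\le Cr^p\ln^q r$, the right-hand side becomes a one-dimensional weighted integral $\int_{r_0}^{R}|\eta'(r)|^{p}\,dV(r)$. At the critical exponent a naive linear cutoff only produces a factor $\ln^q R$, which does not close the argument; the decisive step is to use a logarithmic cutoff with $|\eta'(r)|\sim (r\ln(R/r_0))^{-1}$ (equivalently, to optimize the cutoff profile over dyadic annuli). The volume bound then yields
\begin{align*}
\int_{r_0}^{R}|\eta'(r)|^{p}\,dV(r)\le C(\ln R)^{\,q+1-p},
\end{align*}
and since $q+1=\tfrac{\alpha}{\alpha-(m-1)}<\tfrac{m\alpha}{\alpha-(m-1)}=p$ precisely because $m>1$, the right-hand side tends to $0$ as $R\to\infty$. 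Keeping $\psi\equiv 1$ on $B_{r_0}$ forces $A\ge\int_{B_{r_0}}g(u)f(u)\,d\mu>0$, so letting $R\to\infty$ gives $0<A\le 0$, the desired contradiction.

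I expect two steps to be the genuine obstacles. The first is the algebraic one of exhibiting a single admissible $g$ (decreasing, with sink term of the correct sign, and with $g^m/(-g')^{m-1}$ reabsorbable) that works for \emph{every} subcritical $f$ rather than only for $f(t)=t^\alpha$: the monotonicity of $f(t)/t^\alpha$ must be used quantitatively, and a parameter $s$ entering $g$ will have to be sent to $0$ at a rate tied to $\ln R$ so that the constants stay under control. The second is the sharp borderline analysis: the exponent $q$ is calibrated exactly so that the logarithmic capacity integral diverges, and extracting the clean $(\ln R)^{q+1-p}$ decay, rather than a spurious power of $R$, requires choosing the cutoff profile optimally and tracking the $s$-dependent constants with care.
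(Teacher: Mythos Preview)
Your outline follows the same nonlinear-capacity scheme as the paper (test with $\psi^\kappa g(u)$, Young, H\"older reabsorption, optimal cutoff), and your logarithmic cutoff is the continuous analogue of the paper's dyadic average $\varphi_i=\tfrac{1}{i}\sum_{k=i+1}^{2i}\eta_k$. However, there is a genuine gap at the borderline step.

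\medskip
\noindent\textbf{The one--step estimate does not give decay.} Your inequality is $A\le C(s)\int_M|\nabla\psi|^{m/(1-\theta)}$ with $m/(1-\theta)<p$ and $C(s)\asymp s^{-(m-1)/(1-\theta)}$. You then evaluate the right-hand side \emph{at the exponent $p$} and obtain $(\ln R)^{q+1-p}\to0$. But you cannot take the exponent equal to $p$ (that would force $s=0$, hence $g'\equiv0$), and if you couple $s\sim 1/\ln R$ to kill the spurious $R^{p-m/(1-\theta)}$ factor, the blowing-up constant $C(s)\asymp(\ln R)^{(m-1)\alpha/(\alpha-m+1)}$ cancels \emph{exactly} the gain: one checks
\[
\frac{(m-1)\alpha}{\alpha-m+1}+(q+1-p)=0,
\]
so the best your scheme yields is $A\le C$, not $A\le o(1)$. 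The sentence ``letting $R\to\infty$ gives $0<A\le0$'' is therefore not justified.

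The paper closes this gap by a two-step bootstrap. From (2.3) (which is your one-step estimate) one gets only $\int_M u^{-\frac{(m-1)\alpha}{\alpha-m+1}}f^{\frac{\alpha}{\alpha-m+1}}\,d\mu<\infty$. The decisive second step is the refined inequality (2.2), obtained by using a \emph{second} test function $(u^{-\alpha}f)^{t}\varphi^s$ and H\"older; its right-hand side carries the factor
\[
\Bigl(\int_{M\setminus K}u^{-\frac{(m-1)\alpha}{\alpha-m+1}}f^{\frac{\alpha}{\alpha-m+1}}\varphi^s\Bigr)^{\frac{(m-1)(a+1)}{m\alpha}},
\]
with $K=\{\varphi=1\}$. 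Once the full integral is finite, this $M\setminus K$ factor tends to $0$ as the inner radius goes to infinity, while the remaining factors stay bounded; this forces the integral to vanish. Your sketch does not contain this localisation-and-bootstrap mechanism, and without it the critical logarithmic exponent $q$ cannot be reached.

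\medskip
\noindent\textbf{On the choice of $g$.} For general subcritical $f$ the right multiplier is not a pure power of $u$; the paper takes (in your notation) $g(u)=u^{-a}\bigl(f(u)u^{-\alpha}\bigr)^{b}$. The point is that the subcritical hypothesis $\alpha f-tf'\ge0$ is precisely $(f(t)t^{-\alpha})'\le0$, so the factor $(f(u)u^{-\alpha})^{b}$ is decreasing and its derivative has the good sign in the sink term, while the $f'$ contribution is absorbed without any growth assumption on $f'$. Your remark that ``a single admissible $g$ that works for every subcritical $f$'' is an obstacle is well taken, but this explicit choice resolves it.
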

        \par
        Since the constant $C$ is not important, it may vary at different occurrences.

        \section{Proof of Theorem 1.1}
        In order to prove Theorem 1.1, we shall introduce some notations. Set 
       \begin{align*}
           W_{loc}^{1,m}(M)=\{f:M\xrightarrow{} \mathbb{R}|f\in L_{loc}^m,\nabla f\in L_{loc}^m\}.
       \end{align*}
        Denote by $ W_{c}^{1,m}(M)$ the subspace of $W_{loc}^{1,m}(M) $ of functions with compact support.
        \begin{definition}
            Let $M$ be a connected geodesically complete Riemannian manifold. Assume $f$ is subcritical with exponent $\alpha>m-1$. A function $u>0$ is called a positive weak solution of the inequality (1.1) if $u\in W_{loc}^{1,m}(M)\cap L^{\infty}_{loc}(M),\ \frac{1}{u}\in L^{\infty}_{loc}(M)$ and for any nonnegative function $\psi\in W_{c}^m(M)$, the following inequality holds
            \begin{align}
                -\int_M|\nabla u|^{m-2}\langle \nabla u, \nabla \psi \rangle d\mu +\int_M f\psi d\mu \leq 0,
            \end{align}
            where $\langle, \rangle $ is the inner product.
        \end{definition}
        
        Before proving Theorem 1.1, we establish the following lemma.
        \begin{lemma}
            Let $s>\frac{m\alpha}{\alpha-(m-1)(a+1)}$ for every $a\in (0,min\{\frac{\alpha}{m-1}-1,m-1\})$. If $u$ is a positive weak solution of (1.1), then there exists a constant $C>0$ such that for every function $ 0\leq \varphi\leq 1$ with $\varphi \in W_{c}^{1,m}(M)$ one has
            \begin{align}
                &\int_M u^{-\frac{(m-1)\alpha}{\alpha-m+1}}f^{\frac{\alpha}{\alpha-m+1}}\varphi^s
                \nonumber\\
                \leq&  C a^{-(m-1)-\frac{(m-1-a)(m-1)^2}{m(\alpha-m+1)}}\left(\int_M |\nabla \varphi|^{\frac{m(\alpha-a)}{\alpha-m+1}} \right)^{\frac{m-1}{m}}  \nonumber
                \\
                &\cdot  \left(\int_{M\backslash K} u^{-\frac{(m-1)\alpha}{\alpha-m+1}}f^{\frac{\alpha}{\alpha-m+1}}\varphi^s\right)^{\frac{(m-1)(a+1)}{m\alpha}}    \left(\int_M |\nabla \varphi|^{\frac{m\alpha}{\alpha-(m-1)(a+1)}} \right)^{\frac{\alpha-(m-1)(a+1)}{m\alpha}}           
            \end{align}
            and
            \begin{align}
                &\left(\int_M u^{-\frac{(m-1)\alpha}{\alpha-m+1}}f^{\frac{\alpha}{\alpha-m+1}}\varphi^s\right)^{1-\frac{(m-1)(a+1)}{m\alpha}} \nonumber
                \\
                \leq & C a^{-(m-1)-\frac{(m-1-a)(m-1)^2}{m(\alpha-m+1)}}\left(\int_M |\nabla \varphi|^{\frac{m(\alpha-a)}{\alpha-m+1}}\right)^{\frac{m-1}{m}}  \nonumber
                \\
                &\cdot \left(\int_M |\nabla \varphi|^{\frac{m\alpha}{\alpha-(m-1)(a+1)}} \right)^{\frac{\alpha-(m-1)(a+1)}{m\alpha}},
            \end{align}
            where $K=\{x\in M| \varphi(x)=1\}$.
        \end{lemma}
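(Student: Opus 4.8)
The plan is to test the weak formulation (2.1) against two carefully chosen profiles and then glue the two resulting inequalities together by Hölder's inequality. Throughout write $k=\alpha-m+1>0$ and $X=f(u)u^{-(m-1)}$, so that the quantity to be estimated is $I:=\int_M X^{\alpha/k}\varphi^s=\int_M u^{-(m-1)\alpha/k}f^{\alpha/k}\varphi^s$; for the model case $f(t)=t^\alpha$ one has $X^{\alpha/k}=u^\alpha$, which fixes all the exponents below. For any decreasing $\Psi$ I take $\psi=\Psi(u)\varphi^s$, which is admissible because $u,u^{-1},f(u)\in L^\infty_{loc}(M)$ and $\varphi$ has compact support; substituting into (2.1) gives the generic inequality $\int_M(-\Psi'(u))|\nabla u|^m\varphi^s+\int_M f\Psi\varphi^s\le s\int_M\Psi\varphi^{s-1}|\nabla u|^{m-1}|\nabla\varphi|$.

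First I use the \emph{critical} profile $\Psi_1=f^{(m-1)/k}u^{-(m-1)\alpha/k}$, for which $f\Psi_1=X^{\alpha/k}$ and, crucially, $-\Psi_1'=\tfrac{m-1}{k}f^{(m-1)/k-1}u^{-(m-1)\alpha/k-1}(\alpha f-uf')\ge0$ precisely by subcriticality. Discarding this nonnegative gradient term yields $I\le s\int_M\Psi_1\varphi^{s-1}|\nabla u|^{m-1}|\nabla\varphi|=:T_1$; note that only the sign of $-\Psi_1'$ is used, so the fact that $\alpha f-uf'$ may vanish is harmless here. Next I use the \emph{off-critical} profile $\Psi_2=f^{(m-1-a)/k}u^{-(m-1)(\alpha-a)/k}$ (which is exactly $u^{-a}$ when $f=t^\alpha$). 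Here subcriticality delivers the clean lower bound $-\Psi_2'\ge a\,f^{(m-1-a)/k}u^{-(m-1)(\alpha-a)/k-1}$, since $\tfrac{(m-1)(\alpha-a)}{k}-\tfrac{m-1-a}{k}\alpha=a>0$ and $uf'\le\alpha f$. Testing (2.1) against $\psi_2$, using Young's inequality with exponents $\bigl(\tfrac{m}{m-1},m\bigr)$ to absorb $G_2:=\int_M(-\Psi_2')|\nabla u|^m\varphi^s$, and then a Hölder step that closes on its own leading term $\int_M X^{(\alpha-a)/k}\varphi^s$, gives the self-contained bound $G_2\le C\,a^{-(m-1)-\frac{(m-1)(m-1-a)}{\alpha-m+1}}\int_M|\nabla\varphi|^{\frac{m(\alpha-a)}{\alpha-m+1}}$.

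The heart of the argument is to estimate $T_1$ by a three-factor Hölder inequality, factoring the integrand $\Psi_1\varphi^{s-1}|\nabla u|^{m-1}|\nabla\varphi|$ as $\bigl[(-\Psi_2')|\nabla u|^m\varphi^s\bigr]^{(m-1)/m}$ times $\dfrac{\Psi_1}{(-\Psi_2')^{(m-1)/m}}\varphi^{(s-m)/m}|\nabla\varphi|$, and then splitting the second factor against $X^{\alpha/k}\varphi^s$ and a pure gradient term. Using the lower bound on $-\Psi_2'$ in the denominator cancels every occurrence of $\alpha f-uf'$ (one checks that the residual $u$-power $\Gamma$ and $f$-power $B$ satisfy $\Gamma=(m-1)B$, so the leftover is a genuine power of $X$) and produces a factor $a^{-(m-1)/m}$. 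The three Hölder exponents are thereby forced to be $\tfrac{m-1}{m}$ on $G_2$, $\tfrac{(m-1)(a+1)}{m\alpha}$ on $\int_{M\setminus K}X^{\alpha/k}\varphi^s$ (the integral lives on $M\setminus K$ because $\nabla\varphi$ is supported there), and $\tfrac{\alpha-(m-1)(a+1)}{m\alpha}$ on $\int_M|\nabla\varphi|^{\frac{m\alpha}{\alpha-(m-1)(a+1)}}$, and these sum to $1$. Substituting the bound for $G_2$ and collecting the two powers of $a$ into $-(m-1)-\frac{(m-1-a)(m-1)^2}{m(\alpha-m+1)}$ gives exactly (2.2). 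Finally (2.3) is immediate: since $\int_{M\setminus K}X^{\alpha/k}\varphi^s\le I$, I move that factor to the left-hand side of (2.2), leaving $I$ to the power $1-\tfrac{(m-1)(a+1)}{m\alpha}$.

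I expect the main obstacle to be the bookkeeping in the three-factor Hölder step: one must simultaneously match the gradient exponents $\tfrac{m(\alpha-a)}{\alpha-m+1}$ and $\tfrac{m\alpha}{\alpha-(m-1)(a+1)}$ and keep every remaining power of $\varphi$ nonnegative, which is exactly where the hypothesis $s>\tfrac{m\alpha}{\alpha-(m-1)(a+1)}$ enters, together with the constraints $a<m-1$ and $a<\tfrac{\alpha}{m-1}-1$ (the latter guaranteeing $\alpha-(m-1)(a+1)>0$). The other delicate point is to invoke subcriticality in the sharp form $\tfrac{(m-1)(\alpha-a)}{k}f-\tfrac{m-1-a}{k}uf'\ge af$, so that the surviving constant is an honest power of $a$ rather than a negative power of the possibly-vanishing quantity $\alpha f-uf'$.
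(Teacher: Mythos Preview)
Your proposal is correct and follows essentially the same route as the paper. Your off-critical profile $\Psi_2=f^{(m-1-a)/k}u^{-(m-1)(\alpha-a)/k}$ is exactly the paper's first test function $u^{-a-b\alpha}f^b$ with $b=\tfrac{m-1-a}{\alpha-m+1}$, and your critical profile $\Psi_1$ is the paper's second test function $(u^{-\alpha}f)^t$ with $t=\tfrac{m-1}{\alpha-m+1}$; the Young-then-H\"older self-closing step for $G_2$ and the two-stage H\"older splitting of $T_1$ match the paper's (2.5)--(2.6) and (2.7)--(2.8) respectively. The only cosmetic difference is that the paper factors $T_1$ against the bare gradient integral $\int |\nabla u|^m u^{-a-b\alpha-1}f^b\varphi^s$ (picking up an extra $a^{-1}$ from (2.6)), whereas you factor against $G_2=\int(-\Psi_2')|\nabla u|^m\varphi^s$ and recover that same $a^{-(m-1)/m}$ from the lower bound $-\Psi_2'\ge a\,u^{-a-b\alpha-1}f^b$ used in the denominator; the final $a$-exponent is identical either way.
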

        \begin{proof}
            Since (2.3) is a direct consequence of (2.2), we only prove (2.2). Take the test function $\psi=u^{-a}(u^{-\alpha}f)^b\varphi^s=u^{-a-b\alpha}f^b\varphi^s$, where $b>0$ is a constant to be determined later. Then we have
            \begin{align*}
                \nabla \psi=-(a+b\alpha)u^{-a-b\alpha-1}f^b\varphi^s\nabla u+bu^{-a-b\alpha}f^{b-1}f^{'}\varphi^s\nabla +su^{-a-b\alpha}f^b\varphi^{s-1}\nabla \varphi.
            \end{align*}
            From (2.1), we have
            \begin{align*}
                &(a+b\alpha)\int_M |\nabla u|^m u^{-a-b\alpha-1}f^b\varphi^s d\mu-b\int_M |\nabla u|^m u^{-a-b\alpha}f^{b-1}f^{'}\varphi^s d\mu
                \\
                \leq& s\int_M |\nabla u|^{m-2} u^{-a-b\alpha}f^b\varphi^{s-1}\langle \nabla u,\nabla  \varphi\rangle d\mu-\int_Mu^{-a-b\alpha}f^{b+1}\varphi^s.
            \end{align*}
        Since $f$ is subcritical with exponent $\alpha$, we obtain
        \begin{align}
           &a \int_M |\nabla u|^m u^{-a-b\alpha-1}f^b\varphi^s d\mu+\int_Mu^{-a-b\alpha}f^{b+1}\varphi^s d\mu
           \nonumber
           \\
           \leq& s\int_M |\nabla u|^{m-2} u^{-a-b\alpha}f^b\varphi^{s-1}\langle \nabla u,\nabla  \varphi\rangle d\mu.
        \end{align}
        Applying Young's inequality to the right hand side of (2.4), we have
        \begin{align*}
          & s \int_M |\nabla u|^{m-2} u^{-a-b\alpha}f^b\varphi^{s-1}\langle \nabla u,\nabla  \varphi\rangle d\mu 
          \\
          \leq& s \int_M |\nabla u|^{m-1} u^{-a-b\alpha}f^b\varphi^{s-1}|\nabla  \varphi| d\mu 
        \\
        \leq & \int_M (u^{-\frac{(a+b\alpha+1)(m-1)}{m}}f^{\frac{m-1}{m}b}|\nabla u|^{m-1}a^{\frac{m-1}{m}}\varphi^{\frac{m-1}{m}s})
        \\
        &\cdot (a^{-\frac{m-1}{m}}u^{\frac{(a+b\alpha+1)(m-1)}{m}-a-b\alpha}f^{\frac{m}{b}}\varphi^{\frac{s}{m}-1}s|\nabla \varphi| )d\mu
        \\
        \leq &\frac{1}{2}a \int_M |\nabla u|^m u^{-a-b\alpha-1}f^b\varphi^s d\mu+\frac{C}{a^{m-1}}\int_M u^{-a-b\alpha+m-1}f^b\varphi^{s-m}|\nabla\varphi|^m d\mu.
        \end{align*}
        By choosing $b=\frac{m-1-a}{\alpha-m+1}$ and using Young's inequality, we have
        \begin{align}
            &\frac{a}{2}\int_M |\nabla u|^m u^{-a-b\alpha-1}f^b\varphi^s d\mu d\mu+\int_Mu^{-a-b\alpha}f^{b+1}\varphi^sd\mu
           \nonumber
           \\
           \leq& \frac{C}{a^{m-1}}\int_M u^{-a-b\alpha+m-1}f^b\varphi^{s-m} |\nabla\varphi|^m d\mu \nonumber
           \\
           =& \frac{C}{a^{m-1}}\int_M u^{-\frac{(m-1)(m-1-a)}{\alpha-m+1}}f^{\frac{m-1-a}{\alpha-m+1}}\varphi^{s-m} |\nabla\varphi|^m d\mu
           \\
           \leq & \int_M u^{-\frac{(m-1)(m-1-a)}{\alpha-m+1}}f^{\frac{m-1-a}{\alpha-m+1}}\varphi^{\frac{m-1-a}{\alpha-a}s}\frac{C}{a^{m-1}}\varphi^{\frac{\alpha-m+1}{\alpha-a}s-m}|\nabla \varphi|^md\mu \nonumber
           \\
           \leq & \frac{1}{2}\int_Mu^{-\frac{(m-1)(\alpha-a)}{\alpha-m+1}}f^{\frac{\alpha-a}{\alpha-m+1}}\varphi^sd\mu \nonumber
           \\
           &+Ca^{-\frac{(m-1)(\alpha-a)}{\alpha-m+1}}\int_M\varphi^{s-\frac{m(\alpha-a)}{\alpha-m+1}}|\nabla \varphi|^{\frac{m(\alpha-a)}{\alpha-m+1}}d\mu. \nonumber
        \end{align}
         Thus we conclude that 
        \begin{align}
            &\frac{a}{2}\int_M |\nabla u|^m u^{-a-b\alpha-1}f^b\varphi^s d\mu d\mu+\frac{1}{2}\int_Mu^{-a-b\alpha}f^{b+1}\varphi^sd\mu
           \nonumber
           \\
           \leq&Ca^{-\frac{(m-1)(\alpha-a)}{\alpha-m+1}}\int_M\varphi^{s-\frac{m(\alpha-a)}{\alpha-m+1}}|\nabla \varphi|^{\frac{m(\alpha-a)}{\alpha-m+1}}d\mu 
           \\
           \leq&Ca^{-\frac{(m-1)(\alpha-a)}{\alpha-m+1}}\int_M|\nabla \varphi|^{\frac{m(\alpha-a)}{\alpha-m+1}}d\mu, \nonumber
        \end{align}
        since $0\leq \varphi\leq 1$ and
        \begin{align*}
            s>\frac{m\alpha}{\alpha-(m-1)(a+1)}>\frac{m(\alpha-a)}{\alpha-m+1}.
        \end{align*}
        Choosing another test function $\psi=(u^{-\alpha} f)^t \varphi^s$ for some $t>0$, we obtain
        \begin{align*}
            &\int_M u^{-\alpha t}f^{t+1}\varphi^sd\mu \leq s\int_M |\nabla u|^{m-1}u^{-\alpha t}f^t\varphi^{s-1}|\nabla \varphi|d\mu                   \\          
            = & s\int_M|\nabla u|^{m-1}u^{-\frac{(m-1)(a+b\alpha+1)}{m}}f^{\frac{m-1}{m}b}\varphi^{\frac{m-1}{m}s} 
            \\
            &\cdot u^{\frac{(m-1)(a+b\alpha+1)}{m}-\alpha t}f^{t-\frac{m-1}{m}b}\varphi^{\frac{s}{m}-1}|\nabla \varphi|d\mu
            \\
            \leq & C\left(\int_M |\nabla u|^mu^{-a-b\alpha-1}f^b\varphi^s d\mu\right)^{\frac{m-1}{m}}
            \\
            &\cdot \left(\int_Mu^{(m-1)(a+b\alpha+1)-m\alpha t}f^{mt-(m-1)b}\varphi^{s-m} |\nabla \varphi|^m d\mu\right)^{\frac{1}{m}}.
        \end{align*}
        Combining with (2.6), we obtain 
        \begin{align}
            \int_M u^{-\alpha t}f^{t+1}\varphi^sd\mu 
            \leq & C\left(\int_M |\nabla \varphi|^{\frac{m(\alpha-a)}{\alpha-m+1}}a^{-1-\frac{(m-1)(\alpha-a)}{\alpha-m+1}} d\mu\right)^{\frac{m-1}{m}} 
            \\
            &\cdot \left(\int_Mu^{(m-1)(a+b\alpha+1)-m\alpha t}f^{mt-(m-1)b}\varphi^{s-m} |\nabla \varphi|^m d\mu\right)^{\frac{1}{m}}. \nonumber
        \end{align}
        Set $t=\frac{m-1}{\alpha-m+1}$ such that 
        \begin{align*}
            \frac{mt-(m-1)b}{t+1}=\frac{m\alpha t-(m-1)(a+b\alpha+1) }{\alpha t}.
        \end{align*}
        Recalling that $|\nabla \varphi|=0$ on $K$ and using H$\Ddot{\mathrm{o}}$lder inequaliy, we have
        \begin{align}
            &\int_Mu^{(m-1)(a+b\alpha+1)-m\alpha t}f^{mt-(m-1)b}\varphi^{s-m} |\nabla \varphi|^m d\mu \nonumber
            \\
            =&\int_{M\backslash K}(u^{-\frac{(m-1)^2(a+1)}{\alpha-m+1}}f^{\frac{(m-1)(a+1)}{\alpha-m+1}}\varphi^{\frac{(m-1)(a+1)s}{\alpha}}) \nonumber
            \\
            &\cdot (\varphi^{\frac{\alpha-(m-1)(a+1)}{\alpha}s-m}|\nabla \varphi|^m) d\mu\nonumber
            \\
            \leq & \left(\int_{M\backslash K} u^{-\frac{(m-1)\alpha}{\alpha-m+1}}f^{\frac{\alpha}{\alpha-m+1}} \varphi^sd\mu \right)^{\frac{(m-1)(a+1)}{\alpha}}\nonumber
            \\
            &\cdot\left( \int_{M\backslash K} \varphi^{s-\frac{m\alpha}{\alpha-(m-1)(a+1)}}|\nabla \varphi|^{\frac{m\alpha}{\alpha-(m-1)(a+1)}}d\mu\right)^{\frac{\alpha-(m-1)(a+1)}{\alpha}}.
        \end{align}
        Noting that $s>\frac{m\alpha}{\alpha-(m-1)(a+1)}$ and $0\leq \varphi\leq 1$, we obtain (2.2) from (2.7) and (2.8).

        \end{proof}
        Now we are ready to prove Theorem 1.1.
        \\
        $\mathit{Proof\ of\ Theorem\ 1.1} $  First, we choose a cut-off function $h \in C^{\infty}([0,\infty))$ such that
    \begin{eqnarray}
        h|_{[0,1]}=1,\quad 0\leq h\leq 1,\quad h|_{[2,\infty)}=0, \quad |h^{'}|\leq C. \nonumber
     \end{eqnarray}
         Consider a sequence of functions $\{\varphi_i\}_{i\in \mathbb{N}}$ defined by
         \begin{align*}
             \varphi_i=i^{-1}\sum_{k=i+1}^{2i}\eta_k,
         \end{align*}
         where $\eta_k=h\left(\frac{r(x)}{2^k}\right) $ and $r(x)$ is the Riemannian distance between $x,x_0$.  It is easy to verify that the support of $\{\nabla \eta_k\}_{i\in \mathbb{N}}$ are disjoint with each other. Thus,
         \begin{align}
             |\nabla\varphi_i|^\theta=i^{-\theta}\sum_{k=i+1}^{2i}|\nabla\eta_k|^\theta\leq C i^{-\theta}\sum_{k=i+1}^{2i} 2^{-k\theta} \chi_{\{2^k\leq r(\cdot)\leq 2^{k+1}\}},
         \end{align}
         where $ \chi$ is the characteristic function. Define the integral 
         \begin{align*}
             J_i(\theta)=\int_M|\nabla \varphi_i|^\theta d\mu.
         \end{align*}
         Putting $\varphi=\varphi_i$ and $a=i^{-1}$ in (2.3), we obtain
         \begin{align}
                &\left(\int_M u^{-\frac{(m-1)\alpha}{\alpha-m+1}}f^{\frac{\alpha}{\alpha-m+1}}\varphi^sd\mu\right)^{1-\frac{(m-1)(a+1)}{m\alpha}} \nonumber
                \\
                \leq & C i^{\frac{m-1}{m}+\frac{(m-1)^2(\alpha-i^{-1})}{m(\alpha-m+1)} }\left(J_i\left(\frac{m(\alpha-i^{-1})}{\alpha-m+1}\right)\right)^{\frac{m-1}{m}}
                \nonumber\\
                &\cdot \left(J_i\left(\frac{m\alpha}{\alpha-(m-1)(a+1)}\right) \right)^{\frac{\alpha-(m-1)(i^{-1}+1)}{m\alpha}}.
            \end{align}
           From the estimate of $Vol B_{x_0}(R)$, for $i$ large enough, we have
         \begin{align}
             J_i(\theta)=&\int_M|\nabla \varphi_i|^\theta d\mu=\int_{supp|\nabla \varphi_i|}|\nabla \varphi_i|^\theta d\mu\leq Ci^{-\theta}\sum_{k=i+1}^{2i} 2^{-k\theta} V(2^{k+1}) \nonumber
             \\
             \leq &C i^{-\theta}\sum_{k=i+1}^{2i}2^{k(p-\theta)}k^q\leq C i^{q-\theta}\sum_{k=i+1}^{2i}2^{k(p-\theta)}.
         \end{align}
         Using (2.11) with $\theta=\frac{m(\alpha-i^{-1})}{\alpha-m+1}$, we have
         \begin{align}
             J_i\left( \frac{m(\alpha-i^{-1})}{\alpha-m+1}\right)\leq C i^{q-\theta}\sum_{k=i+1}^{2i}2^{\frac{km}{i(\alpha-m+1)}}\leq Ci^{\frac{(1-m)\alpha}{\alpha-m+1}}.
         \end{align}
         Similarly, for $\theta=\frac{m\alpha}{\alpha-(m-1)(i^{-1}+1)}$, we obtain
         \begin{align}
             J_i\left( \frac{m\alpha}{\alpha-(m-1)(i^{-1}+1)}\right)\leq  Ci^{\frac{(1-m)\alpha}{\alpha-m+1}}.
         \end{align}
         Substituting (2.12) and (2.13) into (2.10), we conclude that 
         \begin{align}
                &\left(\int_M u^{-\frac{(m-1)\alpha}{\alpha-m+1}}f^{\frac{\alpha}{\alpha-m+1}}\varphi^sd\mu\right)^{1-\frac{(m-1)(a+1)}{m\alpha}} \nonumber
                \\
                \leq & C i^{\frac{m-1}{m}+\frac{(m-1)^2(\alpha-i^{-1})}{m(\alpha-m+1)} }\cdot (i^{\frac{(1-m)\alpha}{\alpha-m+1}})^{\frac{m-1}{m}+\frac{\alpha-(m-1)(i^{-1}+1)}{m\alpha}}\leq C ,\nonumber
            \end{align}
            that is,
         \begin{align}
             \int_{B_{x_0}(2^{i+1})} u^{-\frac{(m-1)\alpha}{\alpha-m+1}}f^{\frac{\alpha}{\alpha-m+1}}d\mu \leq C.
         \end{align}
         Letting $i \rightarrow \infty$, by Fatou’s lemma, we obtain
         \begin{align}
             \int_M u^{-\frac{(m-1)\alpha}{\alpha-m+1}}f^{\frac{\alpha}{\alpha-m+1}}d\mu \leq C.
         \end{align}
         Combing with (2.2), we have 
         \begin{align*}
             \int_{ B_{x_0}(2^{i+1})} u^{-\frac{(m-1)\alpha}{\alpha-m+1}}f^{\frac{\alpha}{\alpha-m+1}}d\mu \leq  C \left(\int_{M\backslash B_{x_0}(2^{i+1})} u^{-\frac{(m-1)\alpha}{\alpha-m+1}}f^{\frac{\alpha}{\alpha-m+1}}d\mu \right)^{\frac{(m-1)(1+i^{-1})}{m\alpha}}.
         \end{align*}
         Letting $i \rightarrow \infty$ again, we obtain
         \begin{align*}
             \int_{M} u^{-\frac{(m-1)\alpha}{\alpha-m+1}}f^{\frac{\alpha}{\alpha-m+1}}d\mu=0,
         \end{align*}
         which contradicts the assumption that $u>0$ and $f(u)>0$.
         {\qed}

         \section{ Acknowledgments}
        The author would like to thank Professor Yuxin Dong and Professor Xiaohua Zhu for their continued support and encouragement.

\bibliographystyle{siam}
\bibliography{plaplace}

~\\
  Biqiang Zhao
  \\
  $Beijing\ International\ Center\ for$
  \\
  $Mathematical\ Research $
\\
  $ Peking\ University$
\\
   $Beijing\ 100871 ,$ $P.R.\ China $

\end{document}